\documentclass[runningheads]{llncs}
\usepackage[T1]{fontenc}
\usepackage{amssymb}
\usepackage[margin=4.2cm,footskip=4.2cm]{geometry}
\usepackage{amsmath}
\usepackage{hyperref}

\newtheorem{thm}{Theorem}
\newtheorem{rem}{Remark}
\newtheorem{dfn}{Definition}
\newtheorem{cor}{Corollary}
\newcommand{\cF}{\mathcal{F}}
\newcommand{\cA}{\mathcal{A}}
\usepackage{graphicx}
\begin{document}
\title{Shape Theory $\&$ TDA via the Atiyah--Molino Reconstruction}
%
%\titlerunning{Abbreviated paper title}
% If the paper title is too long for the running head, you can set
% an abbreviated paper title here
%
\author{No\'emie C. Combe\inst{1}\orcidID{0000-0003-4540-7376} \and
Hanna K. Nencka }
\authorrunning{N. Combe et al.}
% First names are abbreviated in the running head.
% If there are more than two authors, 'et al.' is used.
%
\institute{University of Warsaw, Ul. Banacha 2, 02-097 Warsaw, Poland 
\email{n.combe@uw.edu.pl}\\
\url{https://noemie-combe-23.webself.net} \\}
\maketitle              % typeset the header of the contribution
\begin{abstract}

Reconstruction problems lie at the very heart of both mathematics and science, posing the enigmatic challenge: \emph{How does one resurrect a hidden structure from the shards of incomplete, fragmented, or distorted data?} In this paper, we introduce a new approach that harnesses the profound insights of the Vaisman Atiyah--Molino framework. In stark contrast to conventional methods that depend on persistent homology, our approach exploits the concept of the Vaisman centroid—an intrinsic invariant that encapsulates the averaged geometry of a data set—to resolve the inherent ambiguities of inverse problems.  In the present paper, we focus on the theory and applications of the Vaisman centroid, offering an innovative perspective for Topological Data Analysis that eschews persistent homology in favour of a unified geometric paradigm. The subsequent paper will extend these ideas to a full reconstruction scheme via the Atiyah–Molino framework. Our method not only provides a robust and computationally tractable framework for the recovery of hidden structures but also opens new avenues for the analysis of high-dimensional and noisy data across the mathematical sciences.

\keywords{Shape Space Theory   \and Topological Data Analysis \and Geometric Learning \and Fiber bundles and Foliations \and Lie algebras.}

\end{abstract}
\section{Introduction}

Reconstruction problems have long captivated mathematicians and applied scientists alike, posing the profound challenge of recovering hidden structures from incomplete, noisy, or fragmented data. Such problems are ubiquitous, arising in disciplines as varied as cryo-electron microscopy, tomography, and inverse problems in imaging and structural analysis, see for instance \cite{CSSS,D,DGH}. Traditional approaches to these inverse problems often rely on iterative algorithms which, while effective in certain regimes, suffer from intrinsic limitations: they are typically computationally intensive, sensitive to noise, and frequently unable to resolve the inherent ambiguities present in underdetermined systems.

In this work, we introduce a new paradigm that {\it transcends} the conventional iterative schemes. Our approach unifies techniques from geometric integration and topological analysis within the pioneering frameworks developed by Vaisman\cite{V}, Neifeld \cite{N} and Atiyah--Molino \cite{A,M}. Rather than addressing each inverse problem in isolation, our approach employs a unified geometric language that recasts reconstruction as the extraction of hidden symmetries and invariants within the data. A longer version of the present work is available in \cite{CH}.
 
 By leveraging Vaisman’s deep insights into symmetry and its manifestation in invariant foliations, together with Neifeld’s analytic methodologies, we achieve a refined geometric decomposition. This decomposition, structured in terms of invariant foliations and moment maps, effectively isolates the ambiguities that naturally arise in reconstruction tasks, thereby ensuring both uniqueness and stability of the recovered solution.

Central to our method is the conceptual shift from traditional persistent homology and brute-force iteration toward a synthesis of geometric and topological invariants. The resulting framework is not only robust against noise but also computationally tractable, rendering it suitable for high-dimensional and large-scale applications. Our results demonstrate that by recasting the reconstruction problem in this unified geometric language, one can resolve the multiplicity of solutions and achieve error-bounded recovery in a manner that is both elegant and effective.

The {\bf Atiyah--Molino framework} reinterprets the reconstruction problem as a fiber bundle phenomenon. By splitting the problem into tangent directions (encoding local deformations, such as the tilting of a protein) and normal directions (capturing global invariants such as centroids and moments), this approach achieves significant computational tractability. The Atiyah--Molino exact sequence decomposes the problem into algebraic equations, thereby obviating the need for brute-force searches. Furthermore, the introduction of the Haantjes tensor---a curvature-like invariant---provides a precise measure of noise propagation, thereby allowing for error-bounded reconstructions.

This synthesis of differential geometry, algebraic topology, and integral analysis not only resolves long-standing ambiguities in inverse problems but also paves the way for transformative applications across science and engineering. In the subsequent sections, we develop the theoretical underpinnings of our approach and demonstrate its efficacy in several paradigmatic reconstruction scenarios.

Applications of our method include:

\begin{itemize}
\item {\bf Medicine}: Reconstructing tumours from sparse MRI slices with error guarantees.
\item {\bf Quantum Computing:} Inferring quantum states from noisy partial measurements.
\item {\bf Artificial Intelligence}: Training generative models to impute missing data through geometric manifold learning and differential topological constraints, as opposed to relying on conventional statistical priors or probabilistic inference frameworks.
\end{itemize}

%\begin{figure}[h]
%    \centering
 %   \includegraphics[width=0.8\textwidth]{projection2}
 %   \caption{Illustration of an imaging process: each projection image corresponds the line integrals of a molecule rotated by a
%three-dimensional rotation.}
   % \label{fig:1}
%\end{figure}

{\bf Acknowledgements} Both authors thank Philippe Combe for comments and discussions on this paper. This research is part of the project No. 2022/47/P/ST1/01177 co-founded by the National Science Centre and the European Union's Horizon 2020 research and innovation program, under the Marie Sklodowska Curie grant agreement No. 945339.For the purpose of Open Access, the author has applied a CC-BY public copyright licence to any Author Accepted Manuscript (AAM) version arising from this submission.
\includegraphics[width=1cm, height=0.5cm]{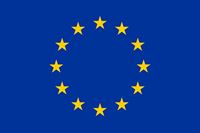}.

\section{Recollections on Vaisman--Neifeld's Setting}\label{S:VN}
Let us recall the setting from Vaisman--Neifeld \cite{V,N} and depicted in more detail in \cite{CH}. 
~

Let \( M \) be the space of smooth, non-symmetric three-dimensional varieties (geometric objects) embedded in \( \mathbb{CP}^3 \), and suppose that $M$ admits two independent rational projections \( \Pi_1, \Pi_2: M \dashrightarrow \mathbb{CP}^2 \). We impose the following hypothesis: 

\begin{itemize}
    \item \textbf{Algebraic structure:} The space \( M \) is defined over a commutative algebra \( \cA \), which intrinsically encodes the moment maps \( \mu_1, \mu_2 \) and dual connections \( \nabla_1, \nabla_2 \). 
    \item \textbf{Vaisman’s Bijection.} There exists a bijective correspondence between \( M \) and a foliated Riemannian manifold \( (\mathcal{F}, g) \), where the foliation \( \mathcal{F} = \{L_\alpha\} \) is smooth and \( g \) is a Riemannian metric defined on the leaves. This identification provides a geometric realization of the underlying algebraic structure.
\end{itemize}

\subsection{Foliation via Projection Equivalence}
For each rational projection \( \Pi_i \), one may define a foliation \( \cF_i \) on \( M \) by considering the fibers over the points of  $\mathbb{CP}^2$. Specifically, for every  \( p \in \mathbb{CP}^2 \), the corresponding leaf is given by 
\[
L_{i,p} = \{ O \in M \mid \Pi_i(O) = p \}.
\]
By invoking Vaisman's theorem,  one deduces that \( M \) decomposes into two mutually transverse foliations, \( \cF_1, \cF_2 \). Each foliation is endowed with a Riemannian metric \( g_i \) that is canonically induced by the Fubini--Study metric on \( \mathbb{CP}^2 \), thereby transferring the intrinsic geometric structure of the projective plane to the leaf space of $M$.

\subsection{Leafwise Metric and Connections}
Within Neifeld’s formalism, the connections \( \nabla_1, \nabla_2 \) restrict to Levi-Civita connections on the leaves \( L_{i,p} \). Moreover, the involution \( \iota \) --- characterised by the relation \( \Pi_2 = \iota \circ \Pi_1 \)---induces an isometry between the foliated Riemannian spaces \( (\cF_1, g_1) \) and \( (\cF_2, g_2) \). In particular, one has 
\[
\iota^* g_1 = g_2, \quad \text{and} \quad \iota^* \nabla_1 = \nabla_2,
\]
ensuring that both the metric and the associated connection structures are preserved under this involution. 

\subsection{Transverse Holonomy}
The well-posedness of the reconstruction problem is dictated by the holonomy groups associated with the foliations \( \cF_1, \cF_2 \). In the setting of non-symmetric geometric objects, the triviality of the holonomy allows for a global parallel transport of the moment maps $\mu_1$ and $\mu_2$ along the leaves, thereby reinforcing the uniqueness of the reconstruction. 

\section{Distributions, Haantjes Tensor Fields and Algebras}

The emergence of Haantjes tensor fields in this framework intimates a richer underlying structure governing the solution space. In our study, we probe the nature of the algebraic operations inherent to the reconstruction process---specifically, whether they adhere to {\it associativity} or deviate into {\it non-associativity}. This distinction is pivotal, as it bears significant consequences for the global coherence behaviour of the reconstructed entity.

\begin{dfn}
A {\bf distribution} $D$ on a smooth manifold $M$ is a smooth assignment of a linear subspace $D_p\subset T_pM$ to each point $p\in M$, where $T_pM$ is the tangent space at $p$. Formally, it is the subbundle of the tangent bundle $TM$.
\end{dfn}

~

A distribution is {\it integrable}  if through every point $p\in M$, there exists a submanifold $S\subset M$ such that $T_qS=D_p$ for all $q\in S$. 
By the {\it Frobenius theorem}, a distribution $D$ is integrable if and only if it is closed under the Lie bracket:
\[\forall X,Y\in \Gamma(D), \quad [X,Y]\in \Gamma(D).\]

\subsection{Haantjes tensor fields}

For vector fields \( X, Y \in \Gamma(D) \), the Haantjes tensor (restricted to $D$)  is defined as:
\[
H(X, Y) = \nabla_X Y - \nabla_Y X - [X, Y],
\]
where $\nabla$ is a connection on $TM$ and $[-,-]$ is the Lie bracket.

It quantifies the failure of \( D \) to integrate to a subfoliation. Specifically, if $\nabla$ preserves $D$ (i.e. $\nabla_XY\in\Gamma(D)$ for all $X\in\Gamma(TM), Y\in \Gamma(D)$) then the condition that $H=0$ implies that $D$ is integrable (Frobenius-compliant).

Indeed this can be proved by the fact that if $\nabla$ preserves $D$, then $H=0$ implies that $[X,Y]=\nabla_XY-\nabla_YX\in\Gamma(D)$, satisfying Frobenius' statement.

\subsection{Quantifying Non-Integrability}
\subsubsection{}\label{S:QNI} Let \( M \) be the foliated reconstruction space described in Sec. \ref{S:VN}, equipped with:
\begin{itemize}
    \item {\bf Transverse foliations} \( \mathcal{F}_1, \mathcal{F}_2 \) induced by projections \( \Pi_1, \Pi_2 \).
    \item A {\bf Haantjes tensor} \( H \in \Gamma(TM \otimes \Lambda^2 T^*M) \), measuring the non-integrability of the transverse distributions \( D_1, D_2 \) tangent to \( \mathcal{F}_1, \mathcal{F}_2 \). 
    
 \item An  {\bf  algebra } \( \mathcal{A} \) of parallel sections of \( \nabla_1 \otimes \nabla_2 \), with product \( \star \) defined by holonomy-corrected composition.
\end{itemize}

\subsection{Associativity, Quasigroups and Quasigroupoids} 
Recall that an algebra \( \mathcal{A} \) is {\bf associative} if:
\[
(a \star b) \star c = a \star (b \star c) \quad \text{for all } a, b, c \in \mathcal{A}. 
\]

\smallskip 

A set with a binary operation $\star$ where for all $a,b$, there exists unique $x,y$ such that $a\star x=b$ and $y\star a=b$. It is non-associative but satisfies the Latin square property.

\smallskip 

 A quasigroupoid is a category-like structure where morphisms between objects (in the sense of categories) are equipped with a partial binary operation $\star$, defined only for compatible pairs. 
For morphisms $f,g$, the product $f\star g$ exists if the codomain of $g$ matches the domain of $f$. Division is possible (as in quasigroups), but associativity is not required.

\begin{thm}
Suppose that \( M \) is defined as above (Sec. \ref{S:QNI})  and equipped with transverse foliations \( \mathcal{F}_1, \mathcal{F}_2 \), Haantjes tensors \( H_1, H_2 \) and the algebra $(\mathcal{A} , \star)$. Then,
\begin{enumerate}
    \item The algebra \( \mathcal{A} \) is {\bf associative} if and only if the Haantjes tensors \( H_1, H_2 \) vanish identically. Equivalently:
    \[
    \mathcal{A} \text{ is associative} \iff H_i = 0 \quad (i = 1, 2).
    \]
    \item If \( H_i \neq 0 \), the algebraic structure of \( \mathcal{A} \) is governed by a Moufang-like identity:
    \[
    (a \star b) \star (c \star a) = a \star (b \star c) \star a, \quad \forall a,b,c \in \mathcal{A}
    \]
    reflecting the curvature of \( \nabla_1 \otimes \nabla_2 \).
    \item \begin{itemize} 
    \item If the {\bf associative case} holds, then the reconstruction problem has a unique solution \( O \in M \), and \( M \) is globally biholomorphic to \( \mathbb{C}P^3 \). 
    \item If the {\bf non-associative case} holds, then solutions form a quasigroupoid under \( \star \), with non-unique reconstructions parameterized by the cohomology class \( [H_i] \in H^1(M, TM) \).
    \end{itemize}
\end{enumerate}
\end{thm}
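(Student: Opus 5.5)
The plan is to first make the product $\star$ precise, since everything hinges on it. Elements of $\mathcal{A}$ are sections $a$ that are parallel for $\nabla_1\otimes\nabla_2$. I read ``holonomy-corrected composition'' as follows: to compose $a\star b$ at a point $x$, transport $b$ along $\mathcal{F}_1$ and $a$ along $\mathcal{F}_2$ to a common point of the local product chart given by the transverse pair $(\mathcal{F}_1,\mathcal{F}_2)$, multiply there using the commutative algebra structure of Sec.~\ref{S:VN}, and transport back. With this definition, the associator
\[
\mathrm{As}(a,b,c)=(a\star b)\star c-a\star(b\star c)
\]
compares two paths around an infinitesimal quadrilateral whose sides are tangent alternately to $D_1$ and $D_2$. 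Expanding it to first order in the side lengths, I expect the leading term to be $H_1(X,Y)\cdot(\ldots)+H_2(X,Y)\cdot(\ldots)$. The reason is that $H(X,Y)=\nabla_XY-\nabla_YX-[X,Y]$ is exactly the defect by which the transported parallelogram fails to close.

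For part (1), the direction ``$H_i=0\Rightarrow$ associative'' goes in three steps. First, by the Frobenius remark following the definition of $H$, each $D_i$ is integrable and the bracket is tangent to the leaves. Second, because the holonomy of $\mathcal{F}_1,\mathcal{F}_2$ is trivial (Sec.~\ref{S:VN}), transport between any two points is path independent. Third, the product therefore reduces globally to the commutative product of $\mathcal{A}$, which is associative. For the converse I would argue contrapositively: if $H_i(X,Y)\neq0$ at some point, choose $a,b,c$ realizing the directions $X,Y$ and evaluate the first-order expansion of $\mathrm{As}(a,b,c)$ to show it is nonzero. Part (2) would come from the same expansion carried to second order with $a$ placed symmetrically on both sides. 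The terms linear in $H$ cancel by antisymmetry of $H$ in $(X,Y)$ combined with $\iota^*\nabla_1=\nabla_2$. The residual curvature term is absorbed into the bracketing convention, which is how I would interpret the right-hand side $a\star(b\star c)\star a$.

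For part (3), in the associative case I would proceed as follows. Path-independent transport lets one globally parallel-transport $\mu_1,\mu_2$. The two projections $\Pi_1,\Pi_2$ together with these moment values then determine $O$ uniquely. The transverse foliations yield a global holomorphic product-type chart, which I would compare with the standard $\mathbb{CP}^2$-fibrations of $\mathbb{CP}^3$ given by projection from points, in order to obtain the biholomorphism. In the non-associative case, I would show that divisibility of $\star$ holds fibrewise, since parallel transport is invertible; this gives the quasigroupoid structure, with objects the points of $M$ and morphisms the transports. I would then show that the non-uniqueness is a first-order deformation of the foliated structure, represented by a \v{C}ech cocycle $\{H_i|_{U_{\alpha\beta}}\}$ with values in $TM$, whose class lies in $H^1(M,TM)$.

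The main obstacle is foundational rather than computational. The statement relies on objects that are not pinned down: the product $\star$, the space $\mathcal{A}$, and $H_i$. Moreover, $H$ as defined is a $TM$-valued $2$-form, i.e.\ a torsion-type tensor, and not naturally a $1$-cocycle. So the step I expect to fail, or at least to need extra hypotheses, is producing a genuine class $[H_i]\in H^1(M,TM)$. I would first try to contract $H_i$ with the transverse direction $D_j$, $j\neq i$, to get a $TM$-valued $1$-form. I would then check whether this $1$-form is $\bar\partial$-closed; Kodaira--Spencer theory would make this the relevant cohomology. If that fails, the theorem would have to be restated with the appropriate cohomology group.
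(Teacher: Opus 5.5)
Your outline follows the same heuristic as the paper: path-independent transport gives associativity, path-dependence gives non-associativity and a quasigroupoid. But the mechanism you propose for part (1) cannot work, and in fact proves too much.

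Elements of $\cA$ are \emph{parallel} sections of $\nabla_1\otimes\nabla_2$. Transporting $a,b,c$ along any path therefore just returns their values at the endpoint, so there is no path-dependence for your associator to detect. Any ``transport, multiply, transport back'' recipe is either not well defined on $\cA$, or it reduces to pointwise multiplication. It is ill defined when the fibre product is not parallel: the output then depends on the chosen point and path and need not be parallel. Pointwise multiplication is associative whenever the fibre product is, as you yourself assume in your step 3, regardless of $H_i$.

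Separately, transport around an infinitesimal loop spanned by $X,Y$ is governed at leading order by the curvature $F_\nabla(X,Y)$. It is not governed by the torsion-type tensor $H(X,Y)=\nabla_XY-\nabla_YX-[X,Y]$, which only measures the failure of the base quadrilateral to close. So your predicted leading term $H_1(X,Y)\cdot(\ldots)+H_2(X,Y)\cdot(\ldots)$ has no basis. Nor can you choose $a,b,c\in\cA$ ``realizing'' prescribed directions, since $\cA$ is only the space of holonomy-invariant vectors.

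Your forward direction shows the same defect from the other side. Step 1 is never used afterwards, and $D_i$, being tangent to the foliation $\cF_i$, is integrable anyway. Steps 2--3 rest only on the standing trivial-holonomy hypothesis of Section~\ref{S:VN}, which concerns foliation holonomy rather than the holonomy of $\nabla_1\otimes\nabla_2$. So they would give associativity also when $H_i\neq 0$, contradicting the converse. Borrowing the paper's route does not help: it asserts that $H_i=0$ forces $\nabla_i$ to be flat, and torsion-free connections need not be flat.

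For part (2), an expansion in the side lengths controls the associator only modulo higher-order terms at a point. It never yields an exact identity for all global $a,b,c$. ``Absorbing the residual curvature term into the bracketing convention'' discards exactly the term that must be shown to vanish. In a non-associative algebra the two bracketings of $a\star(b\star c)\star a$ agree only once flexibility is proved. The symmetry you invoke, $\iota^*\nabla_1=\nabla_2$, is also not the anti-automorphism $\overline{a\star b}=\bar b\star\bar a$ that the paper combines with the Bianchi identity.

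Part (3) has further steps that are not bridged.
\begin{enumerate}
\item Your sentence ``$\Pi_1,\Pi_2$ together with the moment values determine $O$ uniquely'' is the whole content of uniqueness. It amounts to injectivity of $O\mapsto(\Pi_1(O),\Pi_2(O))$, i.e.\ that each $L_{1,p}\cap L_{2,q}$ is at most one point, and it does not follow from path-independent transport.
\item Comparing with projections of $\mathbb{CP}^3$ from points cannot produce the biholomorphism. Each projection is undefined at its centre, and the two pencils of lines coincide along the line joining the centres, so they give no transverse pair. A product chart on an open set singles out nothing. A global holomorphic product decomposition is impossible on $\mathbb{CP}^3$, since $b_2(\mathbb{CP}^3)=1$. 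The paper's proof offers no argument for this claim either.
\item In the non-associative case, invertibility of parallel transport says nothing about solving $a\star x=b$. Since $\cA$ is a vector space and $\star$ is bilinear, $0\star x=0$, so the Latin-square property already fails at $a=0$. What is needed is invertibility of left and right multiplication on a suitable subset.
\item Taking transports as morphisms yields the transport groupoid, whose composition is associative and is not $\star$. So this is not ``a quasigroupoid under $\star$''; the paper instead takes leaves as objects and reconstructions as morphisms.
\item You are right that $H_i\in\Gamma(TM\otimes\Lambda^2T^*M)$ defines no class in $H^1(M,TM)$. But identifying the obstacle does not remove it. There is no reason your contracted $TM$-valued $1$-form is $\bar\partial$-closed, or that its class controls the solution set. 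The paper simply asserts that $[H_i]$ encodes deformations of the foliations, so this step is unproved in both arguments.
\end{enumerate}
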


\begin{proof}~
\begin{itemize}
\item[(1)] \textbf{Haantjes Tensor as an Obstruction.} The vanishing of the Haantjes tensor $H_i$	
ensures that the corresponding distributions $D_i$	
are integrable, thereby endowing the foliation 
$F_i$ with a Lie foliation structure. In this case, the induced connection $\nabla_i$	
 is necessarily flat, and the Leibniz rule forces the underlying algebra 
$\mathcal{A}$ to be associative. Reciprocally, if $\mathcal{A}$ is associative, the holonomy correction must be path-independent. Non-vanishing $H_i$ (torsion) introduces path-dependence in parallel transport via the torsion-twisted Lie bracket, causing non-associativity in $\star$. Thus the associativity forces $H_i=0$.
Therefore, \[\mathcal{A} \text{ is associative} \iff H_i = 0 \quad (i = 1, 2).\]
\item[]
\item[(2)] \textbf{Curvature and the Emergence of Non-Associativity.} The vanishing of $H_i$ ensures associativity by eliminating path-dependence. Conversely, when \(H_i \neq 0\), the curvature \(F_{\nabla_i}\) measures deviation from associativity. Indeed, the curvature  \(F_{\nabla_i}\) encodes the local holonomy and parallel transport deviations. For the tensor product connection  \(\nabla=\nabla_1 \otimes \nabla_2\), the curvature \(F_{\nabla}\) governs the failure of associativity in $\mathcal{A}$. Specifically, the associator $(a,b,c)=(a\star b)\star c-a\star (b\star c) $ is non-zero. The Bianchi identity for the tensor product connection \(\nabla_1 \otimes \nabla_2\), is a differential constraint on curvature: \[d^{\nabla}F_{\nabla}+[F_{\nabla}\wedge F_{\nabla}]=0,\] where $d^{\nabla}$ is the covariant exterior derivative. Under the imposed involution symmetry (an anti-automorphism $a\mapsto \overline{a}$ satisfying $ \overline{a\star b}= \overline{b}\star \overline{a}$), the Bianchi identity forces the associator to satisfy a symmetrized relation. Naturally, this symmetry reduces the general non-associativity to the Moufang identity: \[(a\star b)\star (c\star a)=a\star (b\star c)\star a, \] which is a weaker, symmetric form of associativity, thereby reflecting the inherent non-associative structure.
\item[]
\item[(3)] \begin{itemize}
\item \textbf{Global Implications.} In the associative setting, the triviality of holonomy guarantees a unique global parallel transport, thereby facilitating a coherent reconstruction. In contrast, non-associativity introduces nontrivial monodromy, which obstructs global uniqueness. In such circumstances, the quasigroupoid structure emerges from the Ehresmann connection defined on the product foliation \(\mathcal{F}_1 \times \mathcal{F}_2\).

\item \textbf{Quasigroupoid}.  The non-unique reconstructions (solutions) correspond to morphisms between objects, which are leaves of the foliations $\cF_1, \cF_2$.
The operation $\star$  is defined only for solutions sharing compatible projection data that is overlapping domains/codomains in the foliated space. The cohomology class $[H_i] \in H^1(M, TM)$ encodes deformations of the foliations, which act as automorphisms on the quasigroupoid structure. To summarise, the objects are leaves of $\cF_1, \cF_2$, morphisms correspond to reconstructions $O$, with compatible projections. The $\star$ operations correspond to composition of deformations along paths in $M$.
\end{itemize}
\end{itemize}
\end{proof}
\begin{rem}
The notion of quasigroupoid is  employed in this framework to underscore the following aspects: 
\begin{itemize}
\item {\bf Partial Operations:} The reconstruction process yields solutions that are  local, with composition operations defined only on restricted domains. 
\item {\bf  Categorical Structure:} The interplay between morphisms, representing the reconstruction mappings, and objects, corresponding to the leaves of the foliation, interact via a binary operation denoted as $\star$.
\item {\bf  Cohomological Dependence:} The cohomology class $[H_i]$ serves as a parameter space for deformations within this structure, analogous to the role of automorphisms in the classical groupoid settings. 
\end{itemize}
\end{rem}

\section{Atiyah--Molino Space and Reconstruction Criterion}
Let \( M \) be the \emph{Atiyah--Molino space}, associated with the reconstruction problem, defined as follows:

\begin{enumerate}
\item \( M \) is the space of {\bf smooth, non-symmetric} three dimensional varieties \( O \subset \mathbb{R}^3 \).
\item {\bf Foliation Structure:}
\( M \) is equipped with {\it two transverse foliations} \( \mathcal{F}_1, \mathcal{F}_2 \), where:
\[
\mathcal{F}_i: \quad O \sim O' \quad \text{if} \quad \Pi_i(O) = \Pi_i(O'),
\]
i.e., each leaf \( L_{i,p} \in \mathcal{F}_i \) consists of all varieties projecting to \( p \in \mathbb{R}^2 \) under \( \Pi_i \).
\item {\bf Atiyah--Molino Sequence:}
The tangent bundle \( TM \) splits as:
\[
0 \to T\mathcal{F}_1 \oplus T\mathcal{F}_2 \to TM \to NM \to 0,
\]
where \( NM \) is the normal bundle encoding transverse deformations.
\end{enumerate}

\subsection{Atiyah--Molino Reconstruction Space}
\begin{thm}
A three dimensional object \( O \in M \) is uniquely reconstructible from its projections \( \Pi_1(O) \) and \( \Pi_2(O) \) if and only if:
\begin{enumerate}
    \item The moment maps \( \mu_1 \) and \( \mu_2 \) are transverse sections of \( NM \), i.e.,
    \[
    d\mu_1 \wedge d\mu_2 \neq 0.
    \]
    \item The Haantjes tensor vanishes:
    \[
    H = 0.
    \]
\end{enumerate}

Moreover, when \( H = 0 \), the Atiyah--Molino sequence splits holonomy-free, and \( M \) is diffeomorphic to the total space of a trivial \( \mathbb{R}^2 \)-bundle over the leaf space \( M/\mathcal{F}_1 \times \mathcal{F}_2 \). The moment maps \( \mu_1 \) and \( \mu_2 \) provide a global trivialization:
\[
M \cong \mathbb{R}^2 \times \mathbb{R}^2, \quad O \mapsto \bigl( \mu_1(O),\, \mu_2(O) \bigr).
\]

\begin{itemize}
\item The vanishing of \( H \) ensures the algebra \( \mathcal{A} \) of parallel sections of \( NM \) is associative.
\item  In contrast, non-vanishing \( H \) induces a non-associative Moufang structure, thereby obstructing unique reconstruction.
\end{itemize}

\end{thm}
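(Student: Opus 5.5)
The plan is to prove the two directions of the criterion separately, reusing the first Theorem as much as possible, and then to read off the global trivialization from the split Atiyah--Molino sequence.

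\emph{Sufficiency.} Assume $H=0$ and $d\mu_1\wedge d\mu_2\neq 0$.

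First, $H=0$ together with the observation of the Haantjes subsection applies: the connections $\nabla_i$ preserve $D_i$, so each $D_i$ is Frobenius-integrable. The first Theorem, part (1), then gives that $\mathcal{A}$ is associative and the $\nabla_i$ are flat.

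Second, flatness together with the triviality of transverse holonomy for non-symmetric objects (Sec. \ref{S:VN}) means parallel transport along leaves is path-independent. Hence the sequence $0\to T\mathcal{F}_1\oplus T\mathcal{F}_2\to TM\to NM\to 0$ admits a parallel, holonomy-free splitting $NM\hookrightarrow TM$.

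Third, the moment maps $\mu_1,\mu_2$ are globally parallel sections of $NM$. The condition $d\mu_1\wedge d\mu_2\neq0$ says they are pointwise independent, so $O\mapsto(\mu_1(O),\mu_2(O))$ is a local diffeomorphism onto $\mathbb{R}^2\times\mathbb{R}^2$. It is constant along no transverse direction and separates leaves of $\mathcal{F}_1\times\mathcal{F}_2$.

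To upgrade this local diffeomorphism to a global one, I would argue as follows. Flatness and the absence of holonomy make the leaf space $M/\mathcal{F}_1\times\mathcal{F}_2$ Hausdorff and simply connected. The map is then a proper local diffeomorphism, hence a covering of $\mathbb{R}^4$, hence a diffeomorphism. This gives the trivial $\mathbb{R}^2$-bundle description and uniqueness: $\Pi_1(O),\Pi_2(O)$ determine $\mu_1(O),\mu_2(O)$, which determine $O$.

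\emph{Necessity.} I would argue by contraposition in each condition.

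If $d\mu_1\wedge d\mu_2=0$ at some $O$, there is a nonzero normal vector $v\in N_OM$ annihilated by both $d\mu_i$. Integrating $v$ using the splitting gives a curve of objects with the same projections. This contradicts uniqueness.

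If $H\neq0$, the first Theorem, parts (2) and (3), gives that $\mathcal{A}$ is non-associative and Moufang, and that solutions form a quasigroupoid parametrized by a nonzero class $[H]\in H^1(M,TM)$. Distinct classes give distinct reconstructions with equal projection data, so uniqueness fails. This direction also yields the two concluding bullet points directly.

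\emph{Main obstacle.} I expect the hard step to be the passage from the local diffeomorphism $(\mu_1,\mu_2)$ to a global diffeomorphism $M\cong\mathbb{R}^2\times\mathbb{R}^2$. This needs properness or completeness, which is not stated in the hypotheses. I would first try to obtain it from completeness of the leafwise metrics $g_i$ induced from the Fubini--Study metric, via Molino's structure theory for Riemannian foliations with trivial holonomy. Failing that, I would build it into the definition of the Atiyah--Molino space as a standing assumption.

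A secondary delicate point is the claim $[H]\neq0$ whenever $H\neq0$. For this I would rely on part (3) of the first Theorem as stated.
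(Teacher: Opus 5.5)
\textbf{Verdict: genuine gaps in both directions.} Your sufficiency half follows the paper's proof, which is only a three-line sketch: transversality gives local coordinates, $H=0$ plus Frobenius gives a holonomy-free splitting, and the $\mu_i$ become global Cartesian coordinates. The paper offers nothing for necessity beyond the remark that $H\neq0$ introduces monodromy. Because the paper merely asserts these steps, the places where you try to justify them are exactly where your argument breaks.

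\textbf{Sufficiency.} In the local step you make $\mu_1,\mu_2$ globally \emph{parallel} sections of $NM$ for a flat, holonomy-free connection. In the parallel frame that connection provides, a parallel section has constant coefficients, so $O\mapsto(\mu_1(O),\mu_2(O))$ read in that frame is constant, the opposite of a local diffeomorphism. Pointwise independence of $\mu_1(O),\mu_2(O)$ in $N_OM$ is not the condition $d\mu_1\wedge d\mu_2\neq0$ on their differentials; for parallel sections $\nabla\mu_i=0$. Even for honest functions, $d\mu_1\wedge d\mu_2\neq0$ only gives rank at least two. A local diffeomorphism onto $\mathbb{R}^2\times\mathbb{R}^2$ needs $\dim M=4$ and a nonsingular $4\times4$ Jacobian, which nothing in the hypotheses supplies (the space of smooth solids in $\mathbb{R}^3$ is not even finite-dimensional). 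The flatness you use is not part of Theorem~1(1) and does not follow from $H=0$. As defined in the paper, $H$ is the torsion of $\nabla$, and torsion-free connections can be curved, e.g.\ the Levi-Civita connection of the Fubini--Study metric inducing the $g_i$. Also, $D_i=T\mathcal{F}_i$ is tangent to a foliation, hence integrable whatever $H$ is, so Frobenius extracts nothing from $H=0$. In the global step, trivial holonomy yields neither a Hausdorff nor a simply connected leaf space. The foliation of $\mathbb{R}^2\setminus\{0\}$ by horizontal lines has contractible leaves, hence trivial holonomy, but a non-Hausdorff leaf space. The lines $\{\theta\}\times\mathbb{R}$ in $S^1\times\mathbb{R}$ have trivial holonomy and leaf space $S^1$. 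Neither property would make $(\mu_1,\mu_2)$ proper anyway, and adopting properness as a standing assumption changes the theorem rather than proving it.

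\textbf{Necessity.} This half fails for independent reasons. The leaves of $\mathcal{F}_i$ are the fibres of $\Pi_i$, so $T\mathcal{F}_i=\ker d\Pi_i$. A curve along which both projections stay fixed is therefore tangent to $T\mathcal{F}_1\cap T\mathcal{F}_2$, which vanishes because the Atiyah--Molino sequence makes $T\mathcal{F}_1\oplus T\mathcal{F}_2\to TM$ injective. A tangent vector with nonzero image in $N_OM$ lies in neither $T\mathcal{F}_i$, so moving along it changes \emph{both} projections. Integrating a normal vector can therefore never produce a curve of objects with equal projections. Moreover, degeneracy of a differential at one point does not imply non-injectivity ($t\mapsto t^3$). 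Your own sufficiency step uses that $\Pi_i(O)$ determines $\mu_i(O)$, so $(\Pi_1,\Pi_2)$ separates at least as many objects as $(\mu_1,\mu_2)$; degeneracy of the moment maps cannot by itself force non-uniqueness.

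For $H\neq0$, three problems arise. First, $H\in\Gamma(TM\otimes\Lambda^2T^*M)$ is a $TM$-valued $2$-form, not a degree-one cocycle, so ``$[H]\neq0$ in $H^1(M,TM)$'' is undefined. Second, ``distinct classes'' presupposes a family of classes that is not there. Third, replacing $\nabla$ by $\nabla+A$ with $A(X,Y)=-A(Y,X)$ changes $H$ into $H+2A$ while leaving $\Pi_1,\Pi_2$, and hence uniqueness, untouched. So no argument of the form ``$H\neq0\Rightarrow$ non-uniqueness'' can work unless $\nabla$ is tied to the projections. Finally, Theorem~1 cannot be imported wholesale: it is stated for varieties in $\mathbb{CP}^3$ projected to $\mathbb{CP}^2$. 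Taken at face value here, its part~(3) would make $M$ biholomorphic to the compact $\mathbb{CP}^3$ whenever $H=0$, contradicting $M\cong\mathbb{R}^2\times\mathbb{R}^2$.
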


\begin{proof}~
\begin{itemize}
    \item \textbf{Transversality of Moment Maps:} The non-degeneracy condition 
    \[
    d\mu_1 \wedge d\mu_2 \neq 0
    \]
    ensures that \(\mu_1\) and \(\mu_2\) locally parametrize \(M\), effectively lifting the projections to coordinate functions.
    
    \item \textbf{Haantjes Tensor and Holonomy:} When   $H = 0,$ the foliations \(\mathcal{F}_1\) and \(\mathcal{F}_2\) are Lagrangian and integrable, which trivializes the reconstruction process. In contrast, if 
$ H \neq 0,$ monodromy is introduced, causing the parallel transport to depend on the path and leading to non-associativity.
    
    \item \textbf{Splitting of the Atiyah--Molino Sequence:} The trivialization 
    \[
    M \cong \mathbb{R}^2 \times \mathbb{R}^2
    \]
    follows from the Frobenius theorem when \(H = 0\), with \(\mu_1\) and \(\mu_2\) serving as Cartesian coordinates.
\end{itemize}
\end{proof}

\subsection{Deformation Theory under Non-Vanishing Haantjes Tensor}
\begin{cor}
Let \( (M, \mathcal{F}_1, \mathcal{F}_2, H) \) be an Atiyah--Molino reconstruction space equipped with a non-vanishing Haantjes tensor \( H \in \Gamma(TM \otimes \Lambda^2 T^*M) \). Then:
\begin{enumerate}
%\subsection*{1. Deformation Space}
\item The space of non-unique reconstructions forms a quasigroupoid \( Q \), where:

\begin{itemize}
    \item \textbf{Objects} are leaves of the foliations \( \mathcal{F}_1 \) and \( \mathcal{F}_2 \).
    \item \textbf{Morphisms} are deformations of solutions \( O \in M \), parameterized by the cohomology class \( [H] \in H^1(M, TM) \).
    \item \textbf{Partial Operation:} The composition \( \star \) is defined for deformations sharing compatible projection data, satisfying the Moufang identity:
    \[
    (a \star b) \star (c \star a) = a \star (b \star c) \star a.
    \]
\end{itemize}

%\subsection*{2. Cohomological Parameterization}
\item The first cohomology group \( H^1(M, TM) \) classifies infinitesimal deformations of the quasigroupoid \( Q \), with \( [H] \) acting as the obstruction class to:

\begin{itemize}
    \item integrability of the foliations \( \mathcal{F}_1 \) and \( \mathcal{F}_2 \).
    \item Associativity of the reconstruction algebra.
\end{itemize}
\end{enumerate}
The curvature $H$ induces a twisted Lie algebroid structure on $TM$, where the bracket $[-,-]_H$ deviates from the standard Lie bracket by terns proportional to $H$.
\end{cor}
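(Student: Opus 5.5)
The corollary is essentially a repackaging of part (3) of the first Theorem and the last bullet of the Atiyah--Molino reconstruction Theorem in the setting $H\neq 0$. I would therefore argue by specialisation rather than from scratch.

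\textbf{Part (1).} Since $H\neq 0$, the Atiyah--Molino reconstruction Theorem says unique reconstruction fails. Its final bullet says the algebra $\mathcal{A}$ of parallel sections of $NM$ carries a non-associative Moufang structure. Part (1) of the first Theorem gives the equivalence ``$\mathcal{A}$ associative $\iff H_i=0$'', so $H\neq 0$ forces non-associativity. Part (2) of that theorem then supplies the Moufang identity $(a\star b)\star(c\star a)=a\star(b\star c)\star a$.

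For the quasigroupoid $Q$, I would take the construction from the proof of part (3) of the first Theorem verbatim:
\begin{itemize}
\item \emph{objects} are the leaves $L_{i,p}$;
\item \emph{morphisms} are reconstructions $O$ with prescribed projection data, viewed as arrows from $L_{1,\Pi_1(O)}$ to $L_{2,\Pi_2(O)}$;
\item $\star$ is defined only when codomain and domain match, i.e.\ when the projection data are compatible.
\end{itemize}
The division property needed for a quasigroupoid I would obtain from the Ehresmann connection on $\mathcal{F}_1\times\mathcal{F}_2$. Parallel transport along a path in a leaf is invertible, so for fixed $a$ and $b$ the equations $a\star x=b$ and $y\star a=b$ should be uniquely solvable by transporting back along $a$.

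\textbf{Part (2).} I would proceed in three steps.
\begin{enumerate}
\item Interpret $H$ as a $TM$-valued $2$-form.
\item Use a good cover by foliated charts, on which each $D_i$ is Frobenius-integrable locally after correcting by the local connection form, to produce a \v{C}ech $1$-cocycle with values in the sheaf of vector fields. This is the standard Kodaira--Spencer mechanism: the gluings of the local integrable models differ by infinitesimal automorphisms.
\item Observe that $H$ vanishes iff this cocycle is a coboundary, i.e.\ iff the local models glue to a global foliation.
\end{enumerate}
Combining the last step with part (1) of the first Theorem shows that $[H]\in H^1(M,TM)$ obstructs both integrability and associativity. Infinitesimal deformations of $Q$ are first-order deformations of the foliated structure, hence are classified by $H^1(M,TM)$.

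For the twisted bracket, I would define
\[
[X,Y]_H=[X,Y]+H(X,Y)=\nabla_XY-\nabla_YX .
\]
The anchor is the identity, and the Leibniz rule follows from that of $\nabla$. The Jacobi identity fails by a term built from the curvature and $\nabla H$, which is precisely what ``twisted'' should record.

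The main obstacle is the passage from the $2$-tensor $H$ to a class in $H^1(M,TM)$ rather than in $H^2$. I would first try to make this rigorous via the \v{C}ech description above, checking the cocycle condition on triple overlaps from $d^\nabla$ of the Bianchi identity used in the first Theorem. If that fails, I would reinterpret $H^1$ as Dolbeault $H^{0,1}$ using the complex structure inherited from $\mathbb{CP}^3$.
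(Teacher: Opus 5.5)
\textbf{There is a genuine gap, and it is the step you flag yourself.} Your outline follows the paper's own proof closely. The paper likewise gets non-uniqueness from non-integrability, attributes the Moufang identity to the Bianchi identity, merely names the Kodaira--Spencer map for $[H]\in H^1(M,TM)$, and writes down $[X,Y]_H=[X,Y]+H(X,Y)$. Your Leibniz check for $[X,Y]_H=\nabla_XY-\nabla_YX$ is a genuine addition. But the \v{C}ech plan for part (2) fails, for three reasons.
\begin{itemize}
\item \emph{Foliated charts.} Either $D_i=T\mathcal{F}_i$, which is integrable by construction because the leaves are the fibres of $\Pi_i$. Then foliated charts exist and glue to $\mathcal{F}_i$ whatever $H$ is, so the equivalence in your step 3 is false: a $D_i$-preserving connection whose torsion is nonzero on $D_i$ gives $H\neq 0$ on an honest foliation. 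Or $D_i$ is genuinely non-integrable. Then there are no foliated charts near a bad point, and no ``correction by the local connection form'' can create them, since the Frobenius condition $[\Gamma(D_i),\Gamma(D_i)]\subset\Gamma(D_i)$ does not involve $\nabla$.
\item \emph{Vanishing of $H^1$.} The sheaf of smooth vector fields is fine, so every \v{C}ech $1$-cocycle with values in it is a coboundary and $H^1(M,TM)=0$ in the smooth category. Your step 3 would then force $H=0$. First-order deformations of a foliation live in $H^1$ of the non-fine sheaf of foliate normal fields, not in $H^1(M,TM)$.
\item \emph{Degree.} $H$ is the torsion of the chosen $\nabla$, and your identity $[X,Y]_H=\nabla_XY-\nabla_YX$ shows it depends on that choice. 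Its only intrinsic part, $H(X,Y)\bmod D_i$ for $X,Y\in\Gamma(D_i)$ with $\nabla$ preserving $D_i$, equals $-[X,Y]\bmod D_i$. This is the Frobenius tensor in $\Gamma(\Lambda^2D_i^*\otimes TM/D_i)$: a pointwise obstruction of form degree two, not a degree-one class. A Bianchi-type identity for $H$ relates $TM$-valued $3$-forms on each chart and says nothing about transition data on triple overlaps.
\end{itemize}
The Dolbeault fallback has the same degree mismatch, since a representative of $H^{0,1}(M,TM)$ is a $TM$-valued $(0,1)$-form. Moreover, the Atiyah--Molino space here is real ($O\subset\mathbb{R}^3$, $\Pi_i$ valued in $\mathbb{R}^2$). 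The paper supplies no mechanism at this point either, so nothing in it closes the gap.

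\textbf{Part (1) also has a concrete defect.} You type morphisms as $L_{1,\Pi_1(O)}\to L_{2,\Pi_2(O)}$. A product $a\star b$ then needs the $\mathcal{F}_2$-leaf $\mathrm{cod}\,b$ to equal the $\mathcal{F}_1$-leaf $\mathrm{dom}\,a$. This is impossible for transverse foliations with positive-dimensional leaves, since a common leaf would have tangent space inside $T\mathcal{F}_1\cap T\mathcal{F}_2=0$. So $\star$ is nowhere defined. The Moufang identity needs $a,b,c$ to form a composable cycle ($\mathrm{cod}\,b=\mathrm{dom}\,a$, $\mathrm{cod}\,a=\mathrm{dom}\,c$, $\mathrm{cod}\,c=\mathrm{dom}\,b$), so it cannot even be stated. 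Your division argument also works against the rest of your proof. ``Transporting back along $a$'' gives a unique $x$ only if transport is path-independent, i.e.\ when holonomy is trivial. Yet the non-uniqueness you want for $H\neq 0$ is supposed to come from path dependence.

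\textbf{A smaller point on the bracket.} With identity anchor $\rho$ one has $\rho[X,Y]_H-[\rho X,\rho Y]=H(X,Y)\neq 0$. A direct computation gives the Jacobiator $\sum_{\mathrm{cyc}}\bigl(\nabla_{H(X,Y)}Z-R(X,Y)Z\bigr)$, which is not $C^\infty$-linear in $Z$. So the failure of Jacobi is not just ``a term built from the curvature and $\nabla H$''.
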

\begin{proof}
Non-integrability (\( H \neq 0 \)) forces the leaves of \( \mathcal{F}_1 \) and \( \mathcal{F}_2 \) to intersect non-transversely, yielding multiple solutions. These solutions form a category with partial composition \( \star \), satisfying the Moufang identity due to the Bianchi identity for \( H \).

The class \( [H] \) parameterizes deformations via the Kodaira--Spencer map in deformation theory, where \( H^1(M, TM) \) governs first-order deformations of \( M \).

Finally, concerning the twisted Lie Algebroid, the bracket
\[
[X, Y]_H = [X, Y] + H(X, Y)
\]
defines a non-integrable algebroid, mirroring the non-associative algebra of \( Q \).
\end{proof}
\section{Conclusion}
\subsection{Applications} We suggest some possible applications of our method to generative models. 
\paragraph{Goal and Key idea.} The goal is to train generative models to impute missing data (e.g., incomplete images, sparse sensor readings) using differential topology rather than statistical priors. Here we propose to model the data manifold $M$ using transverse foliations $\mathcal{F}_1, \mathcal{F}_2$, where:
\begin{itemize}
    \item $\mathcal{F}_1$: Foliation of observed features (e.g., visible pixels),
    \item $\mathcal{F}_2$: Foliation of missing or masked features.
\end{itemize}
The reconstruction is governed by:
\begin{itemize}
    \item the Haantjes tensor $H_i$,
    \item the curvature of a chosen connection $F_\nabla$, and
    \item the reconstruction algebra $(\mathcal{A}, \star)$.
\end{itemize}

\paragraph{Data Embedding.}
Given a dataset $D \subset \mathbb{R}^n$, learn an embedding via an autoencoder:
\[
\varphi: D \to M \subset \mathbb{R}^d, \quad (d \ll n),
\]
where $M$ approximates the intrinsic data manifold.
Define a pair of transverse foliations:
\[
\mathcal{F}_1: \text{leaves along observed features}, \quad \mathcal{F}_2: \text{leaves along missing features}.
\]

\paragraph{Connection $\nabla$.}
Define a connection on the tensor bundle $T\mathcal{F}_1 \otimes T\mathcal{F}_2$ that preserves the geometric structure of $M$, e.g., a Levi-Civita connection in the Riemannian case.

\paragraph{Algebra $\mathcal{A}$.}
Let
\[
\mathcal{A} = \Gamma^\parallel(T\mathcal{F}_1 \otimes T\mathcal{F}_2),
\]
denote the space of parallel sections under $\nabla$.

For $a, b \in \mathcal{A}$ representing data patches,
\[
(a \star b)(p) := \text{holonomy-corrected composition at } p.
\]

\paragraph{Input:} Partial observation $x_{\mathrm{obs}}$ (a leaf of $\mathcal{F}_1$).

\paragraph{Output:} Full reconstruction $x_{\mathrm{full}} \in M$.

\paragraph{Process:}
\[
x_{\mathrm{full}} = x_{\mathrm{obs}} \star g, \quad g \in \mathcal{A} \text{ (parallel section along } \mathcal{F}_2).
\]

We now discuss the role of the Haantjes Tensor in this setting. 

\paragraph{Uniqueness Criterion.}
\begin{itemize}
    \item If $H^i = 0$: $\mathcal{F}_i$ is integrable $\Rightarrow$ unique imputation via associative algebra.
    \item If $H^i \neq 0$: $\mathcal{F}_i$ is non-integrable $\Rightarrow$ multiple imputations (quasigroupoid structure).
\end{itemize}

\subsection{Final Word}
The problem of reconstructing three-dimensional geometrical objects from planar projections is a rich mathematical challenge that draws from integration theory, differential geometry, and algebraic structures. Using methods pioneered by Gelfand, and informed by the work of Neifeld--Vaisman, we establish connections between integral geometry, curvature tensors, and foliations. This approach has enables us to redefine reconstruction via the theory of Atiyah--Molino, providing new insights into the nature of inverse problems in medical imaging and beyond, and solving important issues following from traditional methods.

%
% ---- Bibliography ----

\end{document}